\newcommand{\cC}{\mathcal{C}}
\author{Hy Ginsberg}
\date{}
\title{Totally Symmetric Quasigroups of Order 16}
\begin{document}

\setcounter{secnumdepth}{2}
\numberwithin{equation}{section}

\newtheorem{main}{Theorem}

\maketitle

\begin{abstract}
\noindent
We present the number of totally symmetric quasigroups (equivalently, totally symmetric Latin squares) of order 16,
as well as the number of isomorphism classes of such objects.
Totally symmetric quasigroups of orders up to and including 16 that are (respectively) medial, idempotent,
and unipotent are also enumerated.
\end{abstract}

\bigskip
\noindent
\textbf{Keywords:} Quasigroups, Latin squares, totally symmetric, medial.
\bigskip

Suppose $xy = z$ for some elements $x,y,z$ in a quasigroup $Q$ of order $n$.  We say that $Q$ is
\emph{totally symmetric} if this implies that all six equations obtained by permuting the symbols $x$, $y$, and $z$ in this equation hold: 
$xy = z$, $xz = y$, $yx = z$, $yz = x$, $zx = y$, and $zy = x$.

In \cite{baileyenum,baileyenumx} Rosemary Bailey enumerated the totally symmetric quasigroups of orders up to and including $n=10$; these results were
extended by Brendan McKay and Ian Wanless through $n=15$ in \cite{mckaywanless}.  The main purpose of this paper is to announce the results 
for $n=16$, which are as follows:
\begin{main}
There are 
\begin{center}
91,361,407,076,595,590,705,971,200
\end{center}
totally symmetric quasigroups of order 16; these are divided into
\begin{center}
4,366,600,209,354
\end{center}
isomorphism classes.  
\end{main}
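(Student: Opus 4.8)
The statement is an enumeration result, so the plan is computational rather than analytic: I would establish it by an exhaustive, isomorph-free generation of all totally symmetric quasigroups of order $16$, counting the isomorphism classes directly and then recovering the labeled total from the orbit data.

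\textbf{Reformulation.} First I would record precisely the object being counted. Writing $R = \{(x,y,z) : xy = z\} \subseteq Q^3$, the quasigroup axioms say exactly that fixing any two coordinates of a triple in $R$ determines the third, i.e.\ that $R$ is the triple set of a Latin square; the total symmetry hypothesis says precisely that $R$ is invariant under the $S_3$-action permuting the three coordinates. Thus a totally symmetric quasigroup is an $S_3$-invariant Latin square, equivalently a symmetric ($xy=yx$) Latin square obeying the Steiner-type implication $xy=z \Rightarrow xz=y$. This reduces the multiplication to an unordered-pair function $f(\{x,y\})=xy$ subject to local constraints, roughly halving the data to be generated; the diagonal behaviour $x\mapsto xx$ (governing the idempotent and unipotent pieces already treated in the paper) further organizes the search.

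\textbf{Isomorph-free generation.} The objects are taken up to isomorphism, i.e.\ up to the relabelling action of $G=S_{16}$ on $[16]$, so that isomorphism classes are exactly the $G$-orbits. I would produce one representative per orbit by an orderly / canonical-augmentation search in the style of the generation framework used in \cite{mckaywanless}, extending a partial structure one unordered pair (one Steiner-consistent triple) at a time, pursuing only canonical extensions and rejecting non-canonical ones so that each class is output exactly once. Counting the surviving representatives yields the number of isomorphism classes, $4{,}366{,}600{,}209{,}354$.

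\textbf{From classes to the labeled count.} For each representative $Q$ I would read off $\order{\mathrm{Aut}(Q)}$ as a by-product of the canonical-form computation. Since the $G$-orbit of $Q$ has size $\order{G}/\order{\mathrm{Aut}(Q)} = 16!/\order{\mathrm{Aut}(Q)}$, the number of labeled totally symmetric quasigroups is
\[
N \;=\; \sum_{[Q]} \frac{16!}{\order{\mathrm{Aut}(Q)}} \;=\; 16!\,\sum_{[Q]} \frac{1}{\order{\mathrm{Aut}(Q)}},
\]
the sum running over the generated representatives. Evaluating it gives $N = 91{,}361{,}407{,}076{,}595{,}590{,}705{,}971{,}200$. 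This affords a sharp internal check: since $16! = 20{,}922{,}789{,}888{,}000$, and $N$ divided by the class count is at most $16!$ (with equality iff every class is rigid) yet agrees with $16!$ to within a factor extremely close to $1$, almost every class must have trivial automorphism group, the tiny deficit being contributed by the few classes with $\order{\mathrm{Aut}(Q)}>1$; the near-equality also confirms that the correct acting group is $S_{16}$ rather than a larger (e.g.\ paratopy) group.

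\textbf{Validation and the main obstacle.} I would then validate by checking agreement of the same pipeline with the published values for $n\le 15$ in \cite{baileyenum,baileyenumx,mckaywanless}, and ideally by an independent reimplementation and checksum. The principal obstacle is scale, not mathematics: with more than $4\times 10^{12}$ isomorphism classes the difficulty lies in wringing maximal pruning from the symmetry and Steiner constraints, in guaranteeing that the generation is genuinely isomorph-free (no omissions and no duplicates), and in the engineering required to complete and certify a computation of this magnitude.
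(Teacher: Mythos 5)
Your plan is sound and would establish the theorem, but it differs from the paper's actual procedure in its central algorithmic choice. The paper does not perform isomorph-free generation by canonical augmentation: it first constructs the 980 non-isomorphic ``Bailey graphs'' encoding the admissible diagonal configurations for $n=16$ (901 with one idempotent element, 77 with four, 2 with seven), processes each starting configuration as an independent subproblem, and within each one exhaustively builds Cayley tables, testing every \emph{completed} quasigroup for isomorphism against those already found by canonicalizing the associated McKay--Wanless digraph with \texttt{nauty} and storing class representatives in an enormous hash table --- with sections flushed to disk and, when disk space runs out, entire repeat passes to recover discarded data. Your canonical-augmentation scheme buys exactly what that method lacks: each class is emitted once by construction, so no multi-terabyte store of class invariants is needed; the price is that designing an augmentation step (one Steiner-consistent triple at a time) whose canonicity test is both correct and affordable for these objects is the genuinely hard part, and it is precisely what the paper's Bailey-graph decomposition sidesteps by supplying 980 embarrassingly parallel starting points. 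Your orbit-stabilizer formula $N=\sum_{[Q]}16!/\order{\mathrm{Aut}(Q)}$ is a legitimate way to recover the labeled count from the class list, and your observation that $N$ falls just short of $16!$ times the class count (so almost every class is rigid) is a good internal check; the paper does not say how it aggregates the labeled total, but its validation is broader than what you propose: beyond matching Bailey and McKay--Wanless for $n\le 15$, it reproduces Young's predicted medial counts and Schwenk's predicted medial class counts for all $n\le 16$, and the idempotent/unipotent data together with Theorem~2 yield consistency conditions (e.g.\ the 80 unipotent classes) at $n=16$ itself.
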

\noindent

We give more detailed information in Table 1, which subdivides these totals based on the orders of the automorphism
groups of the quasigroups in each class.  Each entry in the table specifies an order of an automorphism group,
the number of isomorphism classes of order 16 totally symmetric quasigroups having an automorphism group of that order, and the total number
of quasigroups contained in those isomorphism classes.

A brief overview of the algorithm used to calculate these results is given in the ``Procedural Summary'' section of this document.

\begin{table}[h]
\begin{center}
\caption{Totally Symmetric Quasigroups of Order 16}
\begin{tabular}{|r|r|r|}
\hline
\multicolumn{1}{|c|}{$|\text{Aut}|$} & Isomorphism Classes & \multicolumn{1}{c|}{Quasigroups} \\
\hline
1 & 4,366,595,344,552 & 91,361,356,919,980,461,490,176,000 \\
2 & 4,706,496 & 49,236,513,458,356,224,000 \\
3 & 60,959 & 425,144,116,260,864,000 \\
4 & 92,028 & 481,370,626,953,216,000 \\
5 & 78 & 326,395,522,252,800 \\
6 & 564 & 1,966,742,249,472,000 \\
7 & 36 & 107,602,919,424,000 \\
8 & 3,986 & 10,424,780,061,696,000 \\
12 & 84 & 146,459,529,216,000 \\
16 & 363 & 474,685,795,584,000 \\
20 & 2 & 2,092,278,988,800 \\
21 & 3 & 2,988,969,984,000 \\
24 & 76 & 66,255,501,312,000 \\
32 & 92 & 60,153,020,928,000 \\
36 & 1 & 581,188,608,000 \\
48 & 7 & 3,051,240,192,000 \\
60 & 1 & 348,713,164,800 \\
64 & 6 & 1,961,511,552,000 \\
96 & 8 & 1,743,565,824,000 \\
168 & 1 & 124,540,416,000 \\
192 & 8 & 871,782,912,000 \\
288 & 1 & 72,648,576,000 \\
1,344 & 1 & 15,567,552,000 \\
20,160 & 1 & 1,037,836,800 \\
\hline
\end{tabular}
\end{center}
\end{table}

\noindent
(We shall have more to say about the unique class with the largest automorphism group in the ``Sample Results'' section.)

Returning to our totally symmetric quasigroup $Q$ of order $n$, 
if $$w(x(yz)) = y(x(wz))$$ for all $w,x,y,z \in Q$, then $Q$ is said to be \emph{medial}.  
This property can also be expressed as $(wx)(yz) = (wy)(xz)$,
and has been referred to by various authors as \emph{abelian} \cite{murdoch,bruck,schwenk} or \emph{entropic} \cite{etherington};
\emph{medial} appears to be the most modernly accepted term \cite{marczak,shcherbacov,stanovsky,byoung}, and so has been adopted for this paper.   

The interest in medial totally symmetric quasigroups arises naturally in the study of elliptic curves, where one can
define an addition for points on a curve by fixing a point $p$ (generally taken to be the projective point at 
infinity) and defining
$$x + y = p(xy).$$
Under this definition, if $Q$ is totally symmetric, then $p$ is the additive identity, $-x = px$, and the
addition thus defined is associative \emph{if and only if} $Q$ is medial
(see the beautiful exercise 1.11 in \cite{ratpoints}, which inspired the author's investigation into these objects).
This addition enjoys the useful property that 
the sum of any two rational points on an elliptic curve $\cC$ is itself a rational point on $\cC$.

Recent results of Benjamin Young \cite{byoung} establish that the number of medial totally symmetric quasigroups of order $n$ is precisely the number
of ``labeled'' abelian groups of order $n$ (i.e.\ counting isomorphic but non-identical groups separately); this is sequence A034382 in the
\emph{On-Line Encyclopedia of Integer Sequences} \cite{oeis}.  Furthermore, J.\ Schwenk \cite{schwenk} gives a formula for the number of isomorphism classes
of medial totally symmetric quasigroups: If 3 does not divide $n$, it is precisely the number of isomorphism classes of abelian groups of order $n$;
otherwise each abelian group of order $n$ contributes one more than the number of non-isomorphic cyclic 3-groups in its invariant factor decomposition
(for $n \le 16$ with $n$ divisible by 3, each abelian group of order $n$  has a single such factor, so for the quasigroups considered in this paper 
there are in this case twice as many isomorphism classes of medial totally symmetric quasigroups as there are isomorphism classes of abelian groups).

Data on totally symmetric quasigroups and medial totally symmetric quasigroups of orders 1 through 16 is presented in Table 2, below.

\begin{table}[h]
\begin{adjustwidth}{-0.8in}{}
\begin{center}
\caption{Totally Symmetric Quasigroups}
\begin{tabular}{|r|r|r|r|r|}
\hline
                      & \multicolumn{2}{c|}{Totally Symmetric Quasigroups}                      & \multicolumn{2}{c|}{Medial}     \\
\thead{Order}         & \thead{Number}                                    & \thead{Classes}    & \thead{Number} & \thead{Classes}    \\
\hline
           1          &         1                                         &         1          &         1          & 1 \\
\hline
           2          &         2                                         &         1          &         2          & 1 \\
\hline
           3          &         3                                         &         2          &         3          & 2 \\
\hline
           4          &        16                                         &         2          &        16          & 2 \\
\hline
           5          &        30                                         &         1          &        30          & 1 \\
\hline
           6          &       480                                         &         3          &       360          & 2 \\
\hline
           7          &      1290                                         &         3          &       840          & 1 \\
\hline
           8          &   163,200                                         &        13          &    15,360          & 3 \\
\hline
           9          &   471,240                                         &        12          &    68,040          & 4 \\
\hline
          10          &        386,400,000                                &        139         &   907,200          & 1 \\
\hline
          11          &       2,269,270,080                               &        65          &   3,991,680        & 1 \\
\hline
          12          &       12,238,171,545,600                          &      25,894        &   159,667,200      & 4 \\
\hline
          13          &      149,648,961,369,600                          &      24,316        &   518,918,400      & 1 \\
\hline
          14          &      8,089,070,513,113,497,600                    &   92,798,256       &    14,529,715,200  & 1 \\
\hline
          15          &  160,650,421,233,958,656,000                      &  122,859,802       &   163,459,296,000  & 2 \\
\hline
          16         & 91,361,407,076,595,590,705,971,200                 & 4,366,600,209,354  & 4,250,979,532,800  & 5 \\
\hline
\end{tabular}
\end{center}
\end{adjustwidth}
\end{table}

We include as well (in Table 3) the numbers of \emph{idempotent} and \emph{unipotent} totally symmetric quasigroups and classes, 
where a quasigroup is \emph{idempotent}
if $xx = x$ for all $x \in Q$, and \emph{unipotent} if $xx = k$ for all $x \in Q$ and some fixed $k \in Q$.
These properties are closely related; McKay and Wanless prove that there is a bijective correspondence between isomorphism classes 
of idempotent totally symmetric quasigroups of order $n$ and isomorphism classes of unipotent totally symmetric quasigroups 
of order $n+1$ \cite[Theorem 5.2]{mckaywanless}, and that moreover the number of
unipotent totally symmetric quasigroups of order $n+1$ is precisely $n+1$ times the number of
idempotent totally symmetric quasigroups of order $n$ \cite[Theorem 2.4(vi) and Lemma 3.1]{mckaywanless}.

\begin{table}[h]
\begin{center}
\caption{Unipotent and Idempotent Totally Symmetric Quasigroups}
\begin{tabular}{|r|r|r|r|r|}
\hline
                      & \multicolumn{2}{c|}{Idempotent}      & \multicolumn{2}{c|}{Unipotent} \\
\thead{Order}         & \thead{Number}     & \thead{Classes} & \thead{Number}  & \thead{Classes} \\
\hline
           1          &        1           & 1               &        1            & 1 \\
\hline
           2          &        0           & 0               &        2            & 1 \\
\hline
           3          &        1           & 1               &        0            & 0 \\
\hline
           4          &        0           & 0               &        4            & 1 \\
\hline
           5          &        0           & 0               &        0            & 0 \\
\hline
           6          &        0           & 0               &        0            & 0 \\
\hline
           7          &       30           & 1               &        0            & 0 \\
\hline
           8          &        0           & 0               &      240            & 1 \\
\hline
           9          &      840           & 1               &        0            & 0 \\
\hline
          10          &        0           & 0               &     8,400           & 1 \\
\hline
          11          &        0           & 0               &        0            & 0 \\
\hline
          12          &        0           & 0               &        0            & 0 \\
\hline
          13          &     1,197,504,000  & 2               &        0            & 0 \\
\hline
          14          &        0           & 0               & 16,765,056,000      & 2 \\
\hline
          15          & 60,281,712,691,200 & 80              &        0            & 0 \\
\hline
          16          &        0           & 0               & 964,507,403,059,200 & 80 \\
\hline
\end{tabular}
\end{center}
\end{table}

The results of Bailey, McKay and Wanless, Young, and Schwenk all provide independent confirmation of the correctness of the results reported in Theorem 1,
as the software used to establish that theorem reproduces a great many of the results computed and predicted by these authors.  
Specifically, the software gives:
\begin{itemize}
\item[-] the number of totally symmetric quasigroups, as well as the number of isomorphism classes of totally symmetric quasigroups,
computed by McKay and Wanless for sets of order $n \leq 15$ (agreeing, of course, with Bailey's results for $n \le 10$);
\item[-] the number of medial totally symmetric quasigroups of order $n$ for all $n \leq 16$ predicted by Young; 
\item[-] the number of isomorphism classes of medial totally symmetric quasigroups of order $n$ for all $n \leq 16$ predicted by Schwenk;
\item[-] the number of idempotent totally symmetric quasigroups, as well as the number of isomorphism classes of idempotent totally symmetric
         quasigroups computed by McKay and Wanless for those orders $n < 16$ that have such objects ($n = 1, 3, 7, 9, 13, 15$).
\item[-] the number of unipotent totally symmetric quasigroups, as well as the number of isomorphism classes of unipotent totally symmetric
         quasigroups computed by McKay and Wanless for those orders $n \leq 16$ that have such objects ($n = 1, 2, 4, 8, 10, 14, 16$).
\end{itemize}

\subsection*{Procedural Summary}

Viewing a totally symmetric quasigroup as a Latin square, or, equivalently, as being represented by its Cayley table,
the entries along the main diagonal are of precisely two types -- those for which $xx = x$ (i.e.\ idempotent elements)
and those for which this is not the case.  
In \cite{baileyenum}, Rosemary Bailey presented criteria constraining the quantity of each type of diagonal entry
based on the order $n$ of the quasigroup, and further described a procedure for constructing directed graphs for each
permissible configuration of the diagonal.  The software developed to enumerate quasigroups for this paper begins by
constructing these ``Bailey graphs''; for $n=16$ there are 980 distinct, non-isomorphic Bailey graphs: 901 with one idempotent element, 
77 with four idempotent elements, and 2 with seven idempotent elements.  These correspond to 980 starting configurations, 
each with a distinct, non-isomorphic arrangement of the elements on the diagonal, and -- in deference to the magnitude
of the problem -- each is processed separately, with the results recorded and ultimately combined.

The fundamental process amounts to constructing totally symmetric quasigroups (by building their Cayley tables) and then checking completed
instances for isomorphism to quasigroups already constructed.  In \cite{mckaywanless}, McKay and Wanless describe a technique for
extending a Bailey graph to a directed graph that completely describes the quasigroup; we employ this technique, allowing us to use the 
\verb|nauty| graph automorphism software \cite{nauty} to test for isomorphism.

A great deal of technical sleight-of-hand is required to make the algorithm workable in practice.
Although there is a wide range of complexity among the starting configurations, 
a fairly typical example might have approximately 25 billion isomorphism classes and 
500 sextillion quasigroups.
Maintaining data structures for individual quasigroups is thankfully not necessary,
but maintaining 25 billion isomorphism classes -- with enough information to distinguish
them from each other -- is a daunting task, and in fact cannot be done within any reasonable limit on the amount of computer memory.
The McKay-Wanless graph for $n=16$ requires 408 bytes to represent in \verb|nauty|; using compression to approximately halve this 
amount still results in an isomorphism class structure that requires roughly 240 bytes.
This amounts to over 6 \emph{terabytes} of memory just to store the isomorphism classes; naturally there are other memory
requirements as well, making the prospect unfeasible for even the most powerful of today's readily available consumer 
computers.  This limitation is addressed by maintaining a large hash table of isomorphism classes, and, when an appropriate 
memory threshold is reached, flushing sections of the hash table to disk for storage and later processing.  As the process
is ongoing, this then requires further flushing of newly constructed isomorphism classes to disk, 
should they happen to belong to already flushed sections of the hash table.  
In some of the more challenging configurations the available disk space proves to be insufficient for the task,
leaving no option but to discard some of the flushed sections of the hash table; when this occurs subsequent iterations
through the entire process are required to recover the abandoned data.

\subsection*{Sample Results}

We include in this section further details pertaining to a sampling of some of the more symmetric starting configurations -- in general 
the more symmetric configurations are easier to compute, as they lend themselves to a higher degree of isomorph detection early on in 
the process.  These examples would be good candidates for independent verification of the results presented herein, without committing 
to the full computation.

As described in the previous section, each starting configuration includes precisely 1, 4, or 7 idempotent elements; the highest degree
of symmetry is found in some of the configurations with 1 idempotent element, and these have the added advantage of having more of the
Cayley table specified from the start (the idempotent elements contribute no additional information to the configuration, whereas total
symmetry implies that if $xx = y$ for some $x \neq y$, then $xy = x$ and $yx = x$, yielding two additional table entries).  Thus the simplest
configurations to compute are those that are highly symmetric with a single idempotent element.  We present four such configurations (those
with the largest automorphism groups -- i.e.\ the most symmetric), as well as the most symmetric configuration with 4 idempotent elements,
and the more symmetric of the two configurations with 7 idempotent elements.
For each we specify the initial configuration of the main diagonal as a 16-tuple, with the $i^\text{th}$ coordinate 
corresponding to the $i^\text{th}$ diagonal entry of the Cayley table (the result of the product of $i$ with itself).
We follow the convention prevalent in Latin squares of using the symbols 1 -- 16.

\subsubsection{Example 1 (The unipotent case)}
Diagonal: (1 \ 1 \ 1 \ 1 \ 1 \ 1 \ 1 \ 1 \ 1 \ 1 \ 1 \ 1 \ 1 \ 1 \ 1 \ 1)

\noindent
Order of the automorphism group of the starting configuration: 1,307,674,368,000 

\begin{center}
\begin{tabular}{|r|r|r|}
\hline
\multicolumn{1}{|c|}{$|\text{Aut}|$} & Isomorphism Classes & \multicolumn{1}{c|}{Quasigroups} \\
\hline
1 & 36 & 753,220,435,968,000 \\
2 & 6 & 62,768,369,664,000 \\
3 & 12 & 83,691,159,552,000 \\
4 & 8 & 41,845,579,776,000 \\
5 & 1 & 4,184,557,977,600 \\
6 & 1 & 3,487,131,648,000 \\
8 & 2 & 5,230,697,472,000 \\
12 & 3 & 5,230,697,472,000 \\
21 & 1 & 996,323,328,000 \\
24 & 2 & 1,743,565,824,000 \\
32 & 1 & 653,837,184,000 \\
36 & 1 & 581,188,608,000 \\
60 & 1 & 348,713,164,800 \\
96 & 1 & 217,945,728,000 \\
168 & 1 & 124,540,416,000 \\
192 & 1 & 108,972,864,000 \\
288 & 1 & 72,648,576,000 \\
20,160 & 1 & 1,037,836,800 \\
\hline
\end{tabular}
\end{center}

\noindent
Total number of isomorphism classes: 80

\noindent
Total number of quasigroups: 964,507,403,059,200

\noindent
Note that the last class, containing 1,037,836,800 quasigroups, is medial as well as unipotent, is the only class with
both of these properties, and has the largest order automorphism group of any 
class (see Table 1).

\subsubsection{Example 2}    
Diagonal: (1 \ 1 \ 2 \ 2 \ 2 \ 2 \ 2 \ 2 \ 2 \ 2 \ 2 \ 2 \ 2 \ 2 \ 2 \ 2)

\noindent
Order of the automorphism group of the starting configuration: 87,178,291,200

\begin{center}
\begin{tabular}{|r|r|r|}
\hline
\multicolumn{1}{|c|}{$|\text{Aut}|$} & Isomorphism Classes & \multicolumn{1}{c|}{Quasigroups} \\
\hline
1 & 649 & 13,578,890,637,312,000 \\
2 & 40 & 418,455,797,760,000 \\
3 & 37 & 258,047,741,952,000 \\
4 & 31 & 162,151,621,632,000 \\
6 & 2 & 6,974,263,296,000 \\
8 & 10 & 26,153,487,360,000 \\
12 & 4 & 6,974,263,296,000 \\
16 & 1 & 1,307,674,368,000 \\
21 & 2 & 1,992,646,656,000 \\
24 & 5 & 4,358,914,560,000 \\
32 & 3 & 1,961,511,552,000 \\
96 & 1 & 217,945,728,000 \\
192 & 1 & 108,972,864,000 \\
1,344 & 1 & 15,567,552,000 \\
\hline
\end{tabular}
\end{center}

\noindent
Total number of isomorphism classes: 787

\noindent
Total number of quasigroups: 14,467,611,045,888,000

\subsubsection{Examples 3 and 4}    

These two examples, having non-isomorphic starting configurations whose automorphism groups have the same order,
yield precisely the same results, and so are presented together.

\noindent
Diagonal (Example 3): (1 \ 1 \ 2 \ 2 \ 2 \ 2 \ 2 \ 2 \ 2 \ 2 \ 2 \ 2 \ 2 \ 2 \ 1 \ 1) \\
Diagonal (Example 4): (1 \ 1 \ 1 \ 1 \ 1 \ 1 \ 1 \ 1 \ 1 \ 1 \ 1 \ 1 \ 1 \ 1 \ 2 \ 2)

\noindent
Order of the automorphism group of the starting configuration: 958,003,200

\begin{center}
\begin{tabular}{|r|r|r|}
\hline
\multicolumn{1}{|c|}{$|\text{Aut}|$} & Isomorphism Classes & \multicolumn{1}{c|}{Quasigroups} \\
\hline
1 & 4,714 & 98,630,031,532,032,000 \\
2 & 177 & 1,851,666,905,088,000 \\
3 & 42 & 292,919,058,432,000 \\
4 & 75 & 392,302,310,400,000 \\
6 & 6 & 20,922,789,888,000 \\
8 & 28 & 73,229,764,608,000 \\
12 & 1 & 1,743,565,824,000 \\
16 & 5 & 6,538,371,840,000 \\
24 & 1 & 871,782,912,000 \\
32 & 3 & 1,961,511,552,000 \\
48 & 1 & 435,891,456,000 \\
64 & 1 & 326,918,592,000 \\
96 & 1 & 217,945,728,000 \\
192 & 1 & 108,972,864,000 \\
\hline
\end{tabular}
\end{center}

\noindent
Total number of isomorphism classes: 5,056

\noindent
Total number of quasigroups: 101,273,277,321,216,000

\subsubsection{Example 5 (4 idempotent elements)}  
Diagonal: (1 \ 1 \ 3 \ 3 \ 5 \ 5 \ 7 \ 7 \ 7 \ 7 \ 7 \ 7 \ 7 \ 7 \ 7 \ 7)

\noindent
Order of the automorphism group of the starting configuration: 2,177,280

\begin{center}
\begin{tabular}{|r|r|r|}
\hline
\multicolumn{1}{|c|}{$|\text{Aut}|$} & Isomorphism Classes & \multicolumn{1}{c|}{Quasigroups} \\
\hline
1 & 1,487,202 & 31,116,414,967,013,376,000 \\
2 & 840 & 8,787,571,752,960,000 \\
3 & 242 & 1,687,771,717,632,000 \\
4 & 698 & 3,651,026,835,456,000 \\
8 & 38 & 99,383,251,968,000 \\
12 & 6 & 10,461,394,944,000 \\
24 & 6 & 5,230,697,472,000 \\
\hline
\end{tabular}
\end{center}

\noindent
Total number of isomorphism classes: 1,489,032

\noindent
Total number of quasigroups: 31,130,656,412,663,808,000

\subsubsection{Example 6 (7 idempotent elements)}  
Diagonal: (1 \ 1 \ 3 \ 3 \ 5 \ 5 \ 7 \ 7 \ 9 \ 9 \ 11 \ 11 \ 13 \ 13 \ 13 \ 13)

\noindent
Order of the automorphism group of the starting configuration: 4,320

\begin{center}
\begin{tabular}{|r|r|r|}
\hline
\multicolumn{1}{|c|}{$|\text{Aut}|$} & Isomorphism Classes & \multicolumn{1}{c|}{Quasigroups} \\
\hline
1 & 3,055,198,131 & 63,923,268,561,123,299,328,000 \\
2 & 63,312 & 662,331,836,694,528,000 \\
3 & 3,233 & 22,547,793,235,968,000 \\
4 & 372 & 1,945,819,459,584,000 \\
6 & 104 & 362,661,691,392,000 \\
12 & 12 & 20,922,789,888,000 \\
\hline
\end{tabular}
\end{center}

\noindent
Total number of isomorphism classes: 3,055,265,164

\noindent
Total number of quasigroups: 63,923,955,770,157,170,688,000

\subsection*{Hardware}

The project was completed on a pair of Dell Precision 5810 workstations (running Fedora Linux at runlevel 3);
both computers were equipped with 256 GB of RAM and 12 terabytes
of hard drive space (comprised of three 4 terabyte drives, configured in a RAID0 array).  One of the computers had an additional 1 TB solid state
drive; that computer was outfitted with an Intel Xeon E5-2699A v4 22-core 2.4 GHz processor; 
the other had an Intel Xeon E5-2697A v4 16-core 2.6 GHz processor.  (These specifications reflect the ultimate configurations of
the two systems; there was initially only one, and much upgrading of components -- including CPUs, memory, and disk drives -- 
was performed while the project was ongoing.)
The final results reported herein were obtained after approximately 12 months of computing.

\subsection*{Feasibility of Computing Results for Order 17}

Whereas there are 980 non-isomorphic starting configurations for totally symmetric quasigroups of order 16, there are only 35 such
configurations for order 17 (18 with 1 idempotent element, 9 with 4 idempotent elements, 5 with 7 idempotent elements, 
2 with 10 idempotent elements, and 1 with 13 idempotent elements).  This relative paucity of cases makes attempting the enumeration
of totally symmetric quasigroups of order 17 a seemingly attractive proposition.  Preliminary investigations, however, indicate that 
the problem is more difficult than it might initially seem.

The most symmetric of the order 17 starting configurations having one idempotent element (which should in principle be the
easiest configuration to compute) has an automorphism group of order 7,776.  This configuration completes in 27.43 hours.
For comparison, a sampling of order 16 configurations with a single idempotent element and comparable automorphism group orders
(5,760 and 8,640) all complete in between 3 and 6 \emph{minutes}.

The most symmetric of the order 17 starting configurations with 4 idempotent elements has an automorphism group of order 15,552;
it required four passes (because it used the entirety of the 12 terabytes of disk space on each of the first three) and took a total of 551 hours (23 days)
to complete.  For comparison, an order 16 case with 4 idempotent elements whose starting configuration has an automorphism group of order 8,640  
completes in 31 minutes.

The implication is that, although there are far fewer starting configurations for order 17, they are \emph{at least} 
two orders of magnitude more difficult to compute.  The examples given were presumably the simplest for 1 and 4 idempotents, 
respectively; the remainder of the configurations can
be expected to be significantly more challenging.  Moreover, as the sizes of the automorphism groups of the starting configurations decrease,
the increase in complexity is far from linear -- otherwise the results for order 16 would have been obtained much more quickly.
In my estimation, if the totally symmetric quasigroups of order 17 are to be enumerated, it is going to require much more powerful
hardware resources than the two workstations used for the order 16 computation, a great deal of time, and / or an algorithmic breakthrough.

\subsubsection*{Acknowledgments}
The insightful comments of the reviewers resulted in many substantial changes, most notably the inclusion of the information
in Table 1 and the sections on sample results and the feasibility of the order 17 computation; I am truly grateful for their suggestions.

The paper was also greatly improved through correspondence with Benjamin Young, who brought to my attention Schwenk's results on the 
number of isomorphism classes of medial totally symmetric quasigroups, thereby disembarrassing me of a 
simpler (and incorrect) conjecture that I would otherwise have submitted for publication.  Much obliged, Ben.

Heartfelt thanks to Jeff Robbins of LiveData Inc., who generously shared his unparalleled knowledge of modern computing platforms 
and techniques, helping design the fantasy system for which the funding unfortunately never materialized\ldots \ 
My remarkable brother-in-law, Robert Edelstein, a writer with no advanced mathematical training, 
somehow managed to beat me to the discovery of a body of published research on these objects\ldots \ 
His continued sincere interest provided a much needed outlet for my frequent, detailed, elaborate bursts of obsessive enthusiasm;
thank you, Rob.
Thanks as well to the brilliant Matthew Welz -- my ``mathematical brother'' -- for his assistance during the early stages of the work, 
and to our ``mathematical father,'' Richard Foote, without whom, as far as I am concerned, there would be no mathematics --
as far as I am concerned, Richard said ``Let $\epsilon > 0$ be given,'' and there was \emph{light}\ldots

\bibliographystyle{amsplain}
\bibliography{/home/hy/Dropbox/math/math}

\providecommand{\noop}[1]{}
\providecommand{\bysame}{\leavevmode\hbox to3em{\hrulefill}\thinspace}
\providecommand{\MR}{\relax\ifhmode\unskip\space\fi MR }
\providecommand{\MRhref}[2]{%
  \href{http://www.ams.org/mathscinet-getitem?mr=#1}{#2}
}
\providecommand{\href}[2]{#2}
\begin{thebibliography}{10}

\bibitem{baileyenum}
R.~A. Bailey, \emph{\noop{aaa}{E}numeration of totally symmetric {L}atin
  squares}, Util. Math. \textbf{15} (1979), 193--216.

\bibitem{baileyenumx}
\bysame, \emph{\noop{bbb}{C}orrigendum to enumeration of totally symmetric
  {L}atin squares}, Util. Math. \textbf{16} (1979), 302.

\bibitem{bruck}
R.~H. Bruck, \emph{Some results in the theory of quasigroups}, Trans. Amer.
  Math. Soc. \textbf{55} (1944), 19--52.

\bibitem{etherington}
I.~M.~H. Etherington, \emph{Quasigroups and cubic curves}, Proc. Edinburgh
  Math. Soc. \textbf{14} (1965), 273--291.

\bibitem{marczak}
A.~W. Marczak, \emph{On nondistributive {S}teiner quasigroups}, Colloq. Math.
  \textbf{74} (1997), no.~1, 135--145.

\bibitem{nauty}
B.~D. Mckay and A.~Piperno, \emph{Practical graph isomorphism {II}}, J.
  Symbolic Comput. \textbf{60} (2014), 94--112.

\bibitem{mckaywanless}
B.~D. Mckay and I.~M. Wanless, \emph{Enumeration of {L}atin squares with
  conjugate symmetry}, J. Combin. Des. \textbf{30} (2022), 105--130.

\bibitem{murdoch}
D.~C. Murdoch, \emph{Structure of abelian quasigroups}, Trans. Amer. Math. Soc.
  \textbf{47} (1941), 134--138.

\bibitem{oeis}
\emph{The on-line encyclopedia of integer sequences}, published electronically
  at https://oeis.org, OEIS Foundation, Inc.

\bibitem{schwenk}
J.~Schwenk, \emph{A classification of abelian quasigroups}, Rend. Mat. Appl.
  \textbf{15} (1995), 161--172.

\bibitem{shcherbacov}
V.~A. Shcherbacov, \emph{On the structure of finite medial quasigroups}, Bul.
  Acad. \c{S}tiin\c{t}e Repub. Mold. Mat. \textbf{47} (2005), no.~1, 11--18.

\bibitem{ratpoints}
J.~H. Silverman and J.~T. Tate, \emph{Rational points on elliptic curves},
  second ed., Undergraduate Texts in Mathematics, Springer, 2015.

\bibitem{stanovsky}
D.~Stanovsky and P.~Vojtechovsky, \emph{Central and medial quasigroups of small
  order}, Bul. Acad. \c{S}tiin\c{t}e Repub. Mold. Mat. \textbf{80} (2016),
  no.~1, 24--40.

\bibitem{byoung}
B.~Young, \emph{Totally symmetric and medial quasigroups and their
  applications}, Master's thesis, Case Western Reserve University, May 2021.

\end{thebibliography}

\end{document}